\newcounter{braid}
\newcounter{strands}
\def\cross{
\@ifnextchar^{\message{Got sup}\cross@sup}{\cross@sub}}
\def\cross@sup^#1_#2{\render@cross{#2}{#1}}
\def\cross@sub_#1{\@ifnextchar^{\cross@@sub{#1}}{\render@cross{#1}{1}}}
\def\cross@@sub#1^#2{\render@cross{#1}{#2}}
\def\render@cross#1#2{
\def\strand{#1}
\def\crossing{#2}
\pgfmathsetmacro{\cross@y}{-\value{braid}*\braid@h}
\pgfmathtruncatemacro{\nextstrand}{#1+1}
\foreach \thread in {1,...,\value{strands}}
{
\pgfmathsetmacro{\strand@x}{\thread * \braid@w}
\ifnum\thread=\strand
\pgfmathsetmacro{\over@x}{\strand * \braid@w + .5*(1 - \crossing) * \braid@w}
\pgfmathsetmacro{\under@x}{\strand * \braid@w + .5*(1 + \crossing) * \braid@w}
\draw[braid] \pgfkeysvalueof{/tikz/braid start} +(\under@x pt,\cross@y pt) to[out=-90,in=90] +(\over@x pt,\cross@y pt -\braid@h);
\draw[braid] \pgfkeysvalueof{/tikz/braid start} +(\over@x pt,\cross@y pt) to[out=-90,in=90] +(\under@x pt,\cross@y pt -\braid@h);
\else
\ifnum\thread=\nextstrand
\else
\draw[braid] \pgfkeysvalueof{/tikz/braid start} ++(\strand@x pt,\cross@y pt) -- ++(0,-\braid@h);
\fi
\fi
}
\stepcounter{braid}
}
\tikzset{braid/.style={double=\pgfkeysvalueof{/tikz/braid colour},double distance=1pt,line width=2pt,white}}
\newcommand{\braid}[2][]{%
\begingroup
\pgfkeys{/tikz/strands=2}
\tikzset{#1}
\pgfkeysgetvalue{/tikz/braid width}{\braid@w}
\pgfkeysgetvalue{/tikz/braid height}{\braid@h}
\setcounter{braid}{0}
\let\sigma=\cross
#2
\endgroup
}
\newtheorem{remark}{Remark}
\newtheorem{theorem}{Theorem}
\newtheorem*{theorem*}{Theorem}
\newtheorem{lemma}{Lemma}
\title{Resolution depth of positive braids}
\author{Elliot Kaplan$^1$, David Krcatovich$^2$ and Patricia O'Brien$^3$}
\address{$^1$Ohio University, Athens, Ohio 45701}
\address{$^2$Michigan State University, East Lansing, Michigan 48824}
\address{$^3$University of Texas, Austin, Texas 78712}
\email{ek432210@ohio.edu}
\email{krcatov6@msu.edu}
\email{pjobrien@utexas.edu}
\begin{document}
\maketitle
\begin{abstract}
The depth of a link measures the minimum height of a resolving tree for the link whose leaves are all unlinks. We show that the depth of the closure of a strictly positive braid word is the length of the word minus the number of distinct letters. 
\end{abstract}
\section{Introduction}
\label{sec:introduction}

It was shown by Conway \cite{Conway} that the Alexander polynomial of an oriented link can be computed through a skein relation. If $L_+$, $L_-$ and $L_0$ are three link diagrams which are identical except in the neighborhood of a point, where they differ as in Figure \ref {skeinknot}, then the relationship 
\begin{equation}
\Delta_{L_+}(t) - \Delta_{L_-}(t)= (t^{-1/2} - t^{1/2})\Delta_{L_0}(t),
\label{skein}
\end{equation}
 along with the fact that $\Delta_{\text{unknot}}(t) = 1$, determines the Alexander polynomial for any oriented link.
\begin{figure}[h!]
\centering
\parbox{5.5cm}{
\centering
\includegraphics[width=5cm]{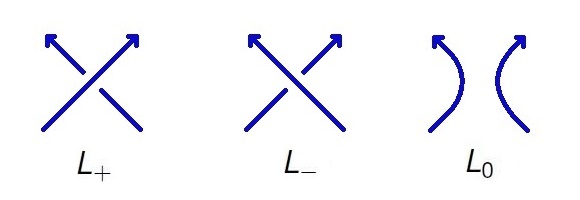}
}
\qquad \qquad \qquad
\begin{minipage}{5.5cm}
\centering
\includegraphics[width=5cm]{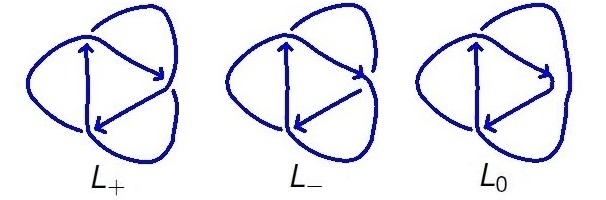}
\end{minipage}
\caption{Three types of crossings and three link projections with these crossings}
\label{skeinknot}
\end{figure}
Similar skein relations can be used to compute the Jones and HOMFLY-PT polynomials of a link.

Iterating this skein relation to compute a link polynomial leads one to construct a \textit{resolving tree} \cite{Adams} (also called a \textit{skein tree} \cite{Thompson}, or \textit{computation tree} \cite{FranksWilliams}). A resolving tree for a link $L$ is a binary tree with $L$ as the root. A diagram $D$ is chosen, and a crossing is selected, so that $D$ serves as $L_{\pm}$; the left child is the diagram with the crossing changed, $L_{\mp}$, and the right is the diagram with the crossing resolved, $L_0$. This is repeated at each node (possibly after an isotopy of the diagram) which is not an unlink. With a deliberate choice of crossings, this process terminates, giving a finite tree. The depth of a leaf is the length of the (shortest) path from it to the root. The depth of a resolving tree is the maximum depth among all leaves. An example of a resolving tree with depth 6 is given in Figure \ref{72tree}. The depth of a link is the minimal depth among all resolving trees for the link. As such, it gives a measure of the complexity of computing link polynomials via this skein relation. \\
\begin{figure}[h!]
\centering
\includegraphics[width=15cm]{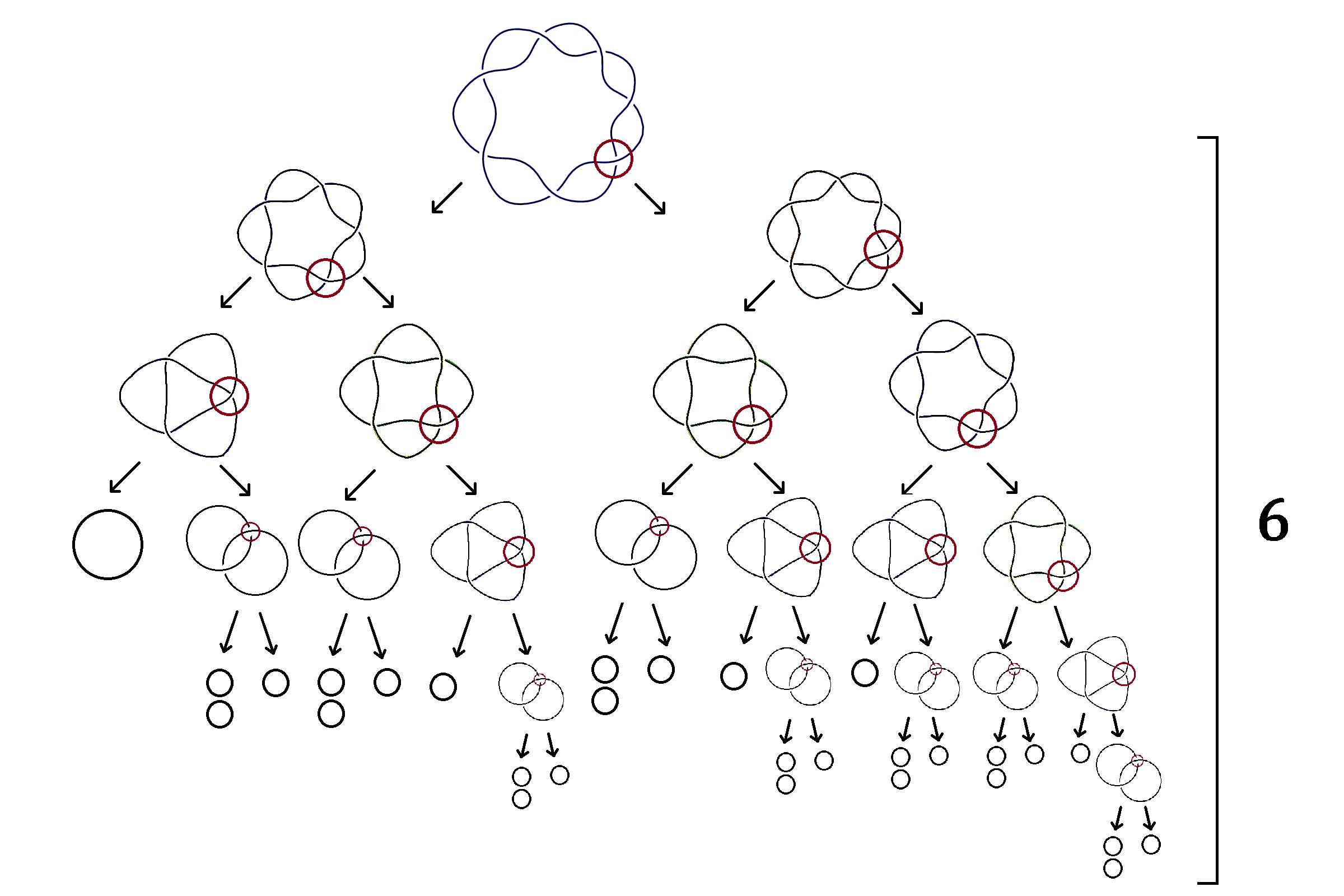}
\caption{A resolving tree for the $(7,2)$-torus knot}
\label{72tree}
\end{figure}

The leftmost branch of a resolving tree consists entirely of crossing changes, and therefore the unlinking number is a lower bound on the depth of a link. Note also that the skein relation \eqref{skein} implies that the breadth of the Alexander polynomial is also a lower bound on the depth. Finding the depth of a particular link may require finding a resolving tree whose depth agrees with one of these lower bounds, which is not generally easy.\\

Links which have depth 1 were classified in \cite{ScharlemannThompson}; those of depth 2 were classified in \cite{Thompson}. In this paper, we determine the depth of any link which is the closure of a positive (or negative) braid, by finding an explicit way to construct a resolving tree whose depth agrees with the lower bound coming from the Alexander polynomial. 
\begin{theorem}
Suppose that an oriented link $L$ is the closure of a strictly positive (negative) braid in $B_n$ which is represented by a word of length $\ell$. Then the depth of $L$ is $\ell-n+1$.
\label{main}
\end{theorem}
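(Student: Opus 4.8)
The plan is to establish the two matching bounds separately: the lower bound $\operatorname{depth}(L)\ge \ell-n+1$ from the breadth of the Alexander polynomial, and the upper bound $\operatorname{depth}(L)\le \ell-n+1$ by exhibiting an explicit resolving tree. It helps to record that if $m_i$ is the number of occurrences of $\sigma_i$, then $\ell-n+1=\sum_{i=1}^{n-1}(m_i-1)$, since strict positivity means every $m_i\ge 1$. This recasts the target depth as the total excess multiplicity and points to an induction that eliminates one excess crossing per level of the tree. The negative case follows by mirror symmetry, as both $\operatorname{depth}$ and $\ell-n+1$ are preserved under reflection.

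For the lower bound I would use the remark already noted above that the breadth of $\Delta_L$ bounds the depth from below, and then compute that breadth. Seifert's algorithm on the closed positive braid diagram produces a Seifert surface $F$ with $n$ disks and $\ell$ bands, connected precisely because strict positivity forces bands joining all $n$ disks, so $\chi(F)=n-\ell$ and $b_1(F)=\ell-n+1$. By Stallings' theorem the closure of a positive braid is fibered with fiber $F$, so $\Delta_L$ is, up to units, the characteristic polynomial of the monodromy on $H_1(F)$; it is therefore monic of degree $b_1(F)$, with breadth exactly $\ell-n+1$, giving $\operatorname{depth}(L)\ge\ell-n+1$.

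For the upper bound I would induct on $\ell$. In the base case $\ell=n-1$, strict positivity forces each generator to occur once, the underlying permutation is a product of the adjacent transpositions $s_1,\dots,s_{n-1}$ each once, which form a path and hence multiply to an $n$-cycle; the closure is then connected, and as $b_1(F)=0$ it is the unknot, a leaf of depth $0=\ell-n+1$. For $\ell\ge n$ some generator repeats, and the engine of the induction is to pass, without changing the closure, to a word containing a square $\sigma_i^2$ (discussed below) and then build the skein move at that crossing. Resolving it deletes one $\sigma_i$, leaving a strictly positive word of length $\ell-1$ on $n$ strands whose closure has depth $\le\ell-n$ by induction; this is the dominant branch. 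Changing the crossing turns $\sigma_i\sigma_i$ into $\sigma_i\sigma_i^{-1}$, which cancels to a positive word of length $\ell-2$: if $m_i\ge 3$ this is still strictly positive with depth $\le\ell-n-1$, while if $m_i=2$ the word omits $\sigma_i$, so its closure is the split union $\widehat\alpha\sqcup\widehat\beta$ of strictly positive closures in $B_i$ and $B_{n-i}$ with lengths summing to $\ell-2$. Using the sublemma $\operatorname{depth}(L_1\sqcup L_2)\le\operatorname{depth}(L_1)+\operatorname{depth}(L_2)$ (resolve one factor while the other rides along, then resolve the other, noting a disjoint union of unlinks is an unlink) together with induction bounds this by $(\ell_\alpha-i+1)+(\ell_\beta-(n-i)+1)=\ell-n$. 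In every case both children have depth $\le\ell-n$, so the tree has depth $\le\ell-n+1$. Here strict positivity is preserved by the rearrangements because positive braid closures are non-split (indeed prime, by Cromwell), so no closure-preserving move on $w$ can drop a generator.

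The main obstacle is the deferred step: that a strictly positive word with $\ell\ge n$ can always be replaced, with the same closure, by a positive word containing a square. I would argue via the reduced/non-reduced dichotomy for the image permutation $\pi\in S_n$. If the word is non-reduced, then by Tits' solution of the word problem some sequence of braid and commutation moves—valid in $B_n$ before the involution relation is ever invoked, hence closure-preserving—produces an adjacent $\sigma_i\sigma_i$. If the word is reduced, then since it uses all $n-1$ generators with $\ell=\ell(\pi)\ge n$, I would apply cyclic conjugation and show that some rotation is non-reduced, reducing to the previous case. Making this final combinatorial claim airtight—ruling out a reduced, all-generators word all of whose cyclic rotations remain reduced—is where I expect the real work to lie; the alternating words such as $\sigma_1\sigma_2\sigma_1\sigma_2$ (squared only after a braid relation) and the reduced words for the longest element (squared only after a rotation) are the guiding test cases that show both mechanisms are genuinely needed.
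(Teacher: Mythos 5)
Your overall architecture is sound and close to the paper's: both bounds are the right ones, your Stallings-fiberedness argument for the lower bound is correct (it is exactly the ``well-known fact'' the paper acknowledges and then deliberately re-proves self-containedly via Kauffman's state sum), and your split-union sublemma plus induction is a workable substitute for the paper's bookkeeping. But the step you deferred is not just ``real work''---the claim you propose to prove there is false. You want to rule out ``a reduced, all-generators word all of whose cyclic rotations remain reduced'' when $\ell\geq n$. Take $w=\sigma_3\sigma_2\sigma_1\sigma_2\in B_4$, so $\ell=4=n$ and all generators appear. The Coxeter word $s_3s_2s_1s_2$ represents the permutation $[4,2,1,3]$ (one-line notation), which has $4$ inversions, so $w$ is reduced; and its rotations $s_2s_1s_2s_3$, $s_1s_2s_3s_2$, $s_2s_3s_2s_1$ represent $[3,2,4,1]$, $[2,4,3,1]$, $[4,1,3,2]$, each with $4$ inversions, so every cyclic rotation is reduced as well. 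Neither arm of your dichotomy fires: Tits never applies. A square does appear, but only by \emph{interleaving} braid moves with rotations: $s_3s_2s_1s_2\to s_3s_1s_2s_1$ (braid relation), rotate twice to $s_2s_1s_3s_1$, then commute the last two letters to get $s_2s_1s_1s_3$. So the statement you actually need is that the orbit of $w$ under rotations \emph{and} positive braid moves jointly contains a word with a square, and your rotate-once-then-apply-Tits mechanism does not deliver it.

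This is precisely the content of the paper's Lemma \ref{doublesigma}, and it is worth seeing how the paper escapes the trap: it does not insist on staying in $B_n$ with $\ell$ fixed. When a generator occurs only once (as $\sigma_3$ does in the example), the paper removes it by Markov destabilization, dropping to fewer strands, and tracks everything with ``essentially strictly positive'' words in $B_n[p]$ and the complexity $\chi=\ell-p+1$, which destabilization preserves; on the example this lands in $B_3$ and finds $\sigma_1^2$ immediately. Your insistence on a fixed-$n$, fixed-$\ell$ normal form is what forces you into the false rotation claim; if you instead allow destabilization and induct on your quantity $\sum_i(m_i-1)=\ell-n+1$ (which destabilization leaves unchanged), your induction scheme goes through and essentially reproduces the paper's Lemmas \ref{doublesigma}--\ref{upperbound}. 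Alternatively, repairing your Coxeter-theoretic route requires the stronger fact about interleaved cyclic shifts and braid moves (in the spirit of Geck--Pfeiffer cyclic-shift theory), which is substantially more machinery than the paper's elementary algorithm.
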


Section \ref{sec:braids} gives the necessary background on braid closures and describes the effect which crossing changes and resolutions have on the corresponding braid word. Section \ref{sec:upper} shows how to construct a particular resolving tree, giving an upper bound on the depth in Lemma \ref{upperbound}. Section \ref{sec:lower}  shows (Lemmas \ref{state1} and \ref{statet}) that this agrees with a lower bound coming from the Alexander polynomial, which proves Theorem \ref{main}.

\textbf{Acknowledgments.} This work was done with support from the Summer Undergraduate Research Institute in Experimental Mathematics (SURIEM) at Michigan State University, in which the first and third authors were participants and the second author a graduate assistant. We are sincerely indebted to Teena Gerhardt, the project mentor, for her guidance throughout the summer. This was funded in part by the NSA and NSF grant DMS-1062817.
\section{Braids}
\label{sec:braids}
\subsection{Manipulating Braids}

Let $B_n$ denote the braid group on $n$ strands, with generators $\sigma_1,\ldots, \sigma_{n-1}$. It will be convenient for an inductive argument in Section \ref{sec:upper} to let $B_n[p]$ denote the subgroup of $B_n$ generated by $\sigma_1,\ldots, \sigma_{p-1}$ (i.e., $n$-stranded braids where at most the first $p$ strands are braided, and the remaining are split unknot components). For future reference we recall the relations of $B_n$:

\begin{itemize}
\item[] \textbf{Rule 1:} $ \sigma_i\ \sigma_i^{-1} = 1 = \sigma_i^{-1}\ \sigma_i$
\item[] \textbf{Rule 2:} $ \sigma_i\ \sigma_{i+1}\ \sigma_i =\ \sigma_{i+1}\ \sigma_i\ \sigma_{i+1}$
\item[] \textbf{Rule 3:} If $\mid i - j \mid > 1$, then $ \sigma_i\ \sigma_j\  =\ \sigma_j\ \sigma_i\  $.
\end{itemize}
Further, if $\omega$ is a word in $B_n$, let $\widehat{\omega}$ denote its closure, a link in $S^3$. The following ways \cite{Markov} of changing $\omega$ do not change the isotopy class of $\widehat{\omega}$:
\begin{itemize}
\item[] \textbf{Rule 4 ((De-)stabilization):} If $\omega\in B_n$ and $\omega'=\omega \sigma_n \in B_{n+1}$,  then $\widehat{\omega}$ is equivalent to $\widehat{\omega'}$ as a link. More generally, if $\omega \in B_n[p]$ and $\omega'=\omega \sigma_p \in B_{n+1}[p+1]$, then $\widehat{\omega}$ is equivalent to  $\widehat{\omega'}$.
\item[] \textbf{Rule 5 (Conjugation):} For any $\omega, \eta \in B_n$, $\widehat{\eta \omega \eta^{-1}}$ is equivalent to $\widehat{\omega}$ as a link.
\end{itemize}
\begin{remark}
\label{braidskein}
An occurence of $\sigma_i$ in a braid word $\omega$ represents an $L_+$ crossing in $\widehat{\omega}$, and similarly $\sigma_i^{-1}$ represents an $L_-$ crossing. A crossing change results in the switching of the exponent from $1$ to $-1$, or vice versa. A crossing resolution results in the elimination of this occurence of $\sigma_i$ from $\omega$.
\end{remark}

This remark is justified by the following diagram, where $\omega_1$ and $\omega_2$ are arbitrary braid words in $B_n$ (for some $n\geq 2$).


\begin{figure}[h]
\begin{center}
\begin{tikzpicture}
	\braid[strands=1,braid start={(0,0)}]
	{\sigma_1^{-1}}
	\node at (1.5, -1.5) {$\omega_1\ \sigma_i\ \omega_2$};
	\node at (4.4, -1.5) {$\omega_1\ \sigma_i^{-1}\ \omega_2$};
	\node at (7.1, -1.5) {$\omega_1 \omega_2$};
	\node at (1.5, .5) {$L_+$};
	\node at (4.3, .5) {$L_-$};
	\node at (7, .5) {$L_0$};
	\braid[strands=2,braid start={(2.8,0)}]
	{\sigma_1}
	\tikzset{translate/.style={shift={(#1)}}}
	\draw[very thick]
	([translate=-40:1cm]5.75,-.5) arc (-40:40:1cm)
	([translate=-40+180:1cm]8.25,-.5) arc (-40+180:40+180:1cm);
	\node at (1,0) {$\wedge$};
	\node at (2,0) {$\wedge$};
	\node at (3.8,0) {$\wedge$};
	\node at (4.8,0) {$\wedge$};
	\node at (6.75,-.5) {$\wedge$};
	\node at (7.25,-.5) {$\wedge$};
\end{tikzpicture}
\end{center}
\end{figure}
By construction, all strands of a braid are oriented in the same direction, so this picture is sufficiently general.

\begin{remark}
Let $\omega$ be a braid word in which, for some $i$, $\sigma_i$ appears two consectutive times (i.e., we have $\sigma_i^2$). If we target the crossing in $\widehat{\omega}$ which is represented by one of these $\sigma_i$'s and change the diagram from $L_+$ to $L_-$, the braid word representing our new link is the result of removing $\sigma_i^2$ from $\omega$. If we instead resolve the crossing, this amounts to replacing $\sigma_i^2$ with $\sigma_i$.
\label{rmk:square}
\end{remark}
\subsection{Strictly Positive Braids}
A \textit{positive braid word} is one with no negative exponents, for example, $\sigma_ 3 \sigma_1 \sigma_3 \sigma_2$. A \textit{positive braid} is one which can be represented by a positive braid word. For example, $\sigma_ 3^{-1} \sigma_1 \sigma_3 \sigma_2$ represents a positive braid (because it is equivalent to $\sigma_1 \sigma_2$), although it is not a positive braid word. A \textit{strictly positive braid} is a braid which can be represented by a positive braid word in $B_n$ such that $\sigma_i$ appears at least once for each $1\leq i\leq n-1$. Note that strict positivity is a characteristic of the braid closure -- the closure of a positive braid is the closure of a strictly positive braid if and only if it is a non-split link. An \textit{essentially strictly positive braid} is one which can be represented by a positive braid word $\omega \in B_n[p]$ for some $p$, such that $\sigma_i$ appears at least once for each $1\leq i\leq p-1$. In this case, we will say $\omega$ is \textit{strictly positive in} $B_n[p]$. Note that essentially strict positivity is also a characteristic of the braid closure -- the closure of a positive braid is the closure of an essentially strictly positive braid if and only if the first $p$ strands form a non-split link and each of the remaining $n-p$ strands is an unknot, for some $p$.

The following Lemma, paired with Remark \ref{rmk:square}, is the key to finding a resolving tree whose complexity decreases at each step. It is a slight variation of Lemma 2.1 of \cite{FranksWilliams} (cf. \cite[p. 34]{Rudolph}).
\begin{lemma}
\label{doublesigma}

Suppose $\omega \in B_n$ is an essentially strictly positive braid whose closure $\widehat{\omega}$ is not an unlink. Then $\widehat{\omega}$ is the closure of an essentially strictly positive braid word in $B_m[p]$ (for some $p\leq m\leq n$) which contains $\sigma_i^2$ for some $i$. Further, such a word can be chosen so that either $i=p$, or there are more than two occurences of $\sigma_i$.

\end{lemma}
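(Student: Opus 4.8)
The plan is to keep rewriting $\omega$ by Rules 2--5 until a word with the required square appears, using a weighted count to guarantee termination. Every move below preserves the isotopy type of $\widehat\omega$ (so the not-an-unlink hypothesis persists) and, as recorded at the end, essential strict positivity; throughout I regard the current word cyclically (legitimate by Rule 5). Write the current word in $B_m[p]$, let $\sigma_{p-1}$ be its top generator, and let $n_j$ denote the number of occurrences of $\sigma_j$.

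First I would make the top generator repeat, arguing by induction on $p$. If $n_{p-1}=1$, conjugate the single $\sigma_{p-1}$ to the end (Rule 5) and de-stabilize (Rule 4); this reduces the number of braided strands and leaves an essentially strictly positive word whose closure is still not an unlink, so the inductive hypothesis applies. The base case $p=2$ is immediate, since then the word is $\sigma_1^{\ell}$ and $\widehat\omega$ is an unlink unless $\ell\ge 2$, in which case $\sigma_1^2$ occurs at the top generator. So I may assume $n_{p-1}\ge 2$, and the word reads cyclically as $\sigma_{p-1}g_1\cdots\sigma_{p-1}g_{k}$ with $k=n_{p-1}\ge 2$ and each gap $g_t$ a positive word in $\sigma_1,\dots,\sigma_{p-2}$.

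The core is a local dichotomy applied at a generator $\sigma_j$: inside a maximal subword free of all $\sigma_{j'}$ with $j'>j$, take two successive occurrences of $\sigma_j$ with intervening word $h\in\langle\sigma_1,\dots,\sigma_{j-1}\rangle$. If some such $h$ avoids $\sigma_{j-1}$, it commutes with $\sigma_j$ (Rule 3) and I slide the two $\sigma_j$'s together into $\sigma_j^2$; if some such $h$ contains exactly one $\sigma_{j-1}$, the rest of $h$ commutes with $\sigma_j$, so I isolate $\sigma_j\sigma_{j-1}\sigma_j$ and apply Rule 2 to get $\sigma_{j-1}\sigma_j\sigma_{j-1}$, lowering $n_j$ by one. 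Suppose neither move is available anywhere. Since no $g_t$ avoids $\sigma_{p-2}$ (else $\sigma_{p-1}^2$ forms) and none contains exactly one (else the second move applies), every top gap has at least two $\sigma_{p-2}$'s, so $n_{p-2}\ge 2k>2$. Descending with the same alternative, within each top gap the two copies of $\sigma_j$ are separated by a word containing at least two $\sigma_{j-1}$'s, all lying in one maximal $\sigma_{\ge j}$-free subword; iterating, each top gap contains at least two copies of every $\sigma_j$ with $1\le j\le p-2$, whence $n_j\ge 2k>2$. The descent cannot stall below $\sigma_1$, whose successive occurrences have empty intervening word and are thus already adjacent. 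So in all cases a square $\sigma_j^2$ is exhibited with $j=p-1$ or $n_j>2$, as required.

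Finally I would organize this into a terminating procedure. Set $\Phi=\sum_j c_j\, n_j$ for any strictly increasing weights $c_1<c_2<\cdots$. Commutations leave $\Phi$ fixed, while each Rule 2 reduction (trading a $\sigma_j$ for a $\sigma_{j-1}$) and each de-stabilization strictly lowers the nonnegative integer $\Phi$; hence only finitely many of these occur, and by the dichotomy above the process must halt by displaying an admissible square. Essential strict positivity is preserved because a Rule 2 reduction is applied only to a genuine consecutive pair (so $\sigma_j$ survives) and de-stabilization removes only a lone top generator. The main obstacle is the counting step: proving that the forced descent propagates ``at least two copies per top gap'' down to whatever generator finally receives a square, so that a non-top square always comes with more than two occurrences. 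This is exactly where the cyclic structure at the top ($k\ge 2$ gaps) is essential, and it is what rules out the dangerous possibility of creating a square of an intermediate generator that occurs only twice.
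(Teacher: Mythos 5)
Your proof is correct, and its engine---the top-down dichotomy at each generator (slide the two occurrences together by Rule 3, or isolate $\sigma_j\sigma_{j-1}\sigma_j$ and apply Rule 2, or descend into a gap containing at least two $\sigma_{j-1}$'s)---is exactly the paper's case analysis (Cases 1--3) for the first claim. Where you genuinely depart from the paper is the ``Further'' clause. The paper proves it by a separate post-processing step: given a deficient square $\sigma_i^2$ (exactly two occurrences, $i$ not the top index), it conjugates the word into the form $\sigma_i^2\eta$, uses Rule 3 to factor $\eta=\eta_-\eta_+$ into letters below and above $i$, and re-runs the first-claim argument on $\eta_+$, either destabilizing it entirely (so that $\sigma_i$ becomes the top letter) or finding a square strictly higher up, with termination because the index only increases. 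You instead make admissibility automatic by strengthening the descent with a counting invariant---each of the $k\ge 2$ cyclic top gaps carries at least two copies of every generator the descent reaches, so any non-top square comes with $n_i\ge 2k>2$ occurrences---and you replace the paper's ad hoc decreasing quantities with a single weighted letter count $\Phi$ for termination. Your route yields both claims in one pass and gives a quantitative bound; its price is the bookkeeping you flag yourself, and it is worth stressing that the top-down order of your descent is not cosmetic: in the word $\sigma_3\sigma_2\sigma_2\sigma_3\sigma_1$ a greedy stop at the visible square $\sigma_2^2$ would be inadmissible (two occurrences, not the top letter), and only checking the top level first---where the gap $\sigma_1$ avoids $\sigma_2$---produces the admissible $\sigma_3^2$. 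Since your write-up establishes the invariant before each descent and only declares a square after the higher levels have been exhausted, the argument goes through.
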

\begin{proof}[Proof of the first claim:]

It is clear that we may assume $\omega$ is strictly positive, for if a word in $B_n[p]$ contains a square, so does the same word in $B_p$ (and this word's closure is either an unlink in both cases or in neither case).

So, let $\omega_0$ be a strictly positive braid word which represents $\omega \in B_n$. Without loss of generality we assume that we have at least two $\sigma_{n-1}$'s in $\omega_0$, for if this is not the case, we can destabilize to get an strictly positive braid word in $B_{n-1}$, and then destabilize repeatedly, as many times as possible. A sequence of such destabilizations gives either the empty braid word in $B_1$, whose closure is the unknot, or a positive braid word in $B_i$ in which $\sigma_{i-1}$ appears at least twice.

We begin by targeting two $\sigma_{n-1}$'s which have no other $\sigma_{n-1}$'s between them in $\omega_0$. Using Rule 3, we will move our $\sigma_{n-1}$'s as close together as possible. After this is done, we are left with three possible configurations:
\begin{itemize}
\item[] \textbf{Case 1:} $\cdots\ \sigma_{n-1}\ \sigma_{n-1}\ \cdots$\\
In this case, a series of Rule 3 moves was sufficient to bring our $\sigma_{n-1}$'s together. Our first claim is satisfied.
\item[] \textbf{Case 2:} $\cdots\ \sigma_{n-1}\ \sigma_{n-2}\ \sigma_{n-1}\ \cdots$\\
Here, we apply Rule 2 to change $\omega$ to $\cdots\ \sigma_{n-2}\ \sigma_{n-1}\ \sigma_{n-2}\ \cdots$. If the resulting $\sigma_{n-1}$ is the only $\sigma_{n-1}$ left, we remove it through destabilization, resulting in $\cdots\ \sigma_{n-2}^2\ \cdots$. Otherwise, we repeat the process with two of the remaining $\sigma_{n-1}$'s (again with no other $\sigma_{n-1}$'s between them). Note that the number of $\sigma_{n-1}$'s in $\omega$ has decreased by one, so after some number of repetitions, we will no longer have to consider this case.
\item[] \textbf{Case 3:} $\cdots\ \sigma_{n-1}\ \sigma_{n-2}\ \cdots\ \cdots\ \sigma_{n-2}\ \sigma_{n-1}\ \cdots$\\
Here our two $\sigma_{n-1}$'s are seperated by a word in $B_{n-1}$, which we will refer to as $\omega_1$,
and $\omega_1$ has at least two $\sigma_{n-2}$'s. We choose two $\sigma_{n-2}$'s with no other $\sigma_{n-2}$'s between them, and attempt to bring these together using Rule 3.\\
\end{itemize}
We can iterate this process, either finding a square (Case 1 or 2) or representing $\widehat{\omega}$ by a word $\omega_i \in B_{n-i}$ for successively larger $i$. Eventually, if we have not produced a square at a previous step, we end up with $\omega_{n-2} \in B_2$ containing at least two $\sigma_1$'s, so we have $\sigma_1^2$.\\

\textit{Proof of the second claim:}

Suppose we have represented $\widehat{\omega}$ by a strictly positive word $\omega' \in B_{n}$ which contains $\sigma_i^2$ for some $i<n-1$, and no other occurences of $\sigma_i$. By conjugating, we may assume $\omega' = \sigma_i^2 \eta$, where $\eta$ is a braid word in $B_n$ not containing $\sigma_i$. Then, using Rule 3, we may write $\omega' = \sigma_i^2 \eta_- \eta_+$, where $\eta_-$ is a word consisting of letters $\sigma_j$ for $j<i$, and $\eta_+$ is a word consisting of letters $\sigma_j$ for $j>i$. Since $\omega'$ is strictly positive, $\eta_+$ is not the empty word, and in fact contains all letters $\sigma_{i+1},\cdots, \sigma_{m-1}$. Applying the argument in the proof of the first claim to $\eta_+$, we can either destabilize all of $\eta_+$ or we can find $\sigma_j^2$ for some $j>i$.

\end{proof}
\section{An Upper Bound on Depth}
\label{sec:upper}
Let $\widehat{\omega_0}$ be the closure of a strictly positive braid word in $B_n[p]$. The aim of this section is to show that we can construct a resolving tree for which the ``complexity" decreases each step down from the root. Recursively, if $\widehat{\omega_{i}^\alpha}$ is the closure of a strictly positive braid word in $B_n[p_i^{\alpha}]$, where $\alpha$ is a binary sequence of length $i$, then after choosing a crossing, define $\widehat{\omega_{i+1}^{\alpha,0}}$ to be the closure of a braid word resulting from an $L_+ \to L_-$ crossing change, and $\widehat{\omega_{i+1}^{\alpha,1}}$ the closure resulting from an $L_{+} \rightarrow L_0$ resolution. Note that by Lemma \ref{doublesigma} and Remark \ref{rmk:square}, a crossing can always be chosen so that both $\widehat{\omega_{i+1}^{\alpha,1}}$ and $\widehat{\omega_{i+1}^{\alpha,0}}$ are also closures of essentially strictly positive braid words. Let $\ell_i^{\alpha}$ denote the length of $\omega_i^{\alpha}$, and let $\chi_i^{\alpha} = \ell_i^{\alpha} - p_i^{\alpha}+1$. We will view $\chi$ as the complexity of the word, since it is merely the length minus the number of distinct letters. Note that if $\widehat{\omega_i^\alpha}$ is the closure of a strictly positive braid in $B_n$, then $p_i^\alpha= n$.
\begin{lemma}
\label{length}
Let $\widehat{\omega_i^\alpha}$ be the closure of a strictly positive braid word in $B_n[p_i^{\alpha}]$ which contains $\sigma_j^2$ for some $j$, such that either there are more than two $\sigma_j$'s or $j = p_i^{\alpha}-1$. If we select a crossing which corresponds to one of these $\sigma_j$'s, then \[\max \left\{ \chi_{i+1}^{\alpha,0}, \chi_{i+1}^{\alpha,1} \right\} \leq \chi_i^\alpha -1.\] 
\end{lemma}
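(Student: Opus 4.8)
The plan is to track separately what the crossing change and the resolution do to the length $\ell := \ell_i^\alpha$ and to the strand parameter $p := p_i^\alpha$, and then read off the effect on $\chi = \ell - p + 1$. Throughout I would invoke Remark~\ref{rmk:square}: deleting one crossing of the targeted $\sigma_j^2$ by an $L_+\to L_-$ change removes $\sigma_j^2$ entirely, so $\ell$ drops by $2$, while the $L_+\to L_0$ resolution replaces $\sigma_j^2$ by $\sigma_j$, so $\ell$ drops by $1$. The only subtlety is how $p$ changes, since for a strictly positive word $p-1$ is exactly the largest index of a generator that occurs; because $\chi$ rewards a decrease in $p$, I must make sure $p$ decreases only when it is accompanied by the full length drop.

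For the resolution child $\omega_{i+1}^{\alpha,1}$, I would observe that we begin with at least two occurrences of $\sigma_j$, so after replacing $\sigma_j^2$ by $\sigma_j$ at least one $\sigma_j$ remains and every other generator is untouched. Hence all of $\sigma_1,\dots,\sigma_{p-1}$ still appear, the word is still strictly positive in $B_n[p]$, and $p_{i+1}^{\alpha,1}=p$. Combined with the length drop this gives $\chi_{i+1}^{\alpha,1} = (\ell-1) - p + 1 = \chi_i^\alpha - 1$.

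For the crossing-change child $\omega_{i+1}^{\alpha,0}$ I would split on the hypothesis. If there are more than two $\sigma_j$'s, then deleting $\sigma_j^2$ still leaves a $\sigma_j$, so no generator disappears, $p_{i+1}^{\alpha,0}=p$, and $\chi_{i+1}^{\alpha,0} = (\ell-2)-p+1 = \chi_i^\alpha - 2$. The remaining possibility is exactly two $\sigma_j$'s, in which case the hypothesis forces $j = p-1$. Here deleting $\sigma_{p-1}^2$ removes the top generator altogether, while $\sigma_1,\dots,\sigma_{p-2}$ are untouched and hence all still occur; the resulting word is therefore strictly positive in $B_n[p-1]$, so $p_{i+1}^{\alpha,0} = p-1$, giving $\chi_{i+1}^{\alpha,0} = (\ell-2)-(p-1)+1 = \chi_i^\alpha - 1$.

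Putting the pieces together, in every case $\chi_{i+1}^{\alpha,1} = \chi_i^\alpha - 1$ and $\chi_{i+1}^{\alpha,0} \le \chi_i^\alpha - 1$, so the maximum is at most $\chi_i^\alpha - 1$, as claimed. The step I expect to require the most care is the bookkeeping of $p$ for the crossing-change child, and this is exactly where the hypothesis earns its keep: were we allowed exactly two $\sigma_j$'s with $j < p-1$, deleting them would erase an interior generator, leaving a word that still contains $\sigma_{j+1}$ but no longer any $\sigma_j$, which destroys strict positivity. The hypothesis ``more than two $\sigma_j$'s, or $j=p-1$'' rules this out, guaranteeing that any generator completely removed is the top one $\sigma_{p-1}$. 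I would make a point of checking in each case that strict positivity is genuinely preserved, which is immediate here since only occurrences of the single generator $\sigma_j$ are altered.
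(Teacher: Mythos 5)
Your proposal is correct and takes essentially the same approach as the paper: a case analysis on whether there are exactly two $\sigma_j$'s (in which case the hypothesis forces $j = p_i^{\alpha}-1$) or more, tracking the effect on $\ell$ and $p$ separately for the crossing-change and resolution children. The only cosmetic difference is that the paper destabilizes away a lone leftover $\sigma_{p-1}$ (dropping both $\ell$ and $p$ by one), whereas you keep it and leave $p$ fixed; the two conventions give identical values of $\chi$, so the bounds agree case by case.
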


Paired with Lemma \ref{doublesigma}, this says we can construct a resolving tree for the closure of an essentially strictly positive braid for which the complexity decreases by at least one at each step.
\begin{proof}

For the left child, we change one of these $\sigma_j$'s to $\sigma_j^{-1}$
We have three cases for this braid closure and its left child:
\begin{enumerate}
\item There are exactly two $\sigma_j$'s remaining.\\
By our premise, $j= p_i^{\alpha}-1$. We change one of the remaining $\sigma_j$'s to $\sigma_j^{-1}$, which then cancels with the last $\sigma_j$. The result is the closure of a strictly positive braid in $B_n[p_i^{\alpha}-1]$. In this case, $\ell_{i+1}^{\alpha,0} = \ell_i^{\alpha} - 2$ and $p_{i+1}^{\alpha,0} = p_i^{\alpha}-1$.
\item  There are exactly three $\sigma_j$'s remaining and $j= p_i^{\alpha}-1$.\\
We change one $\sigma_j$ to $\sigma_j^{-1}$, which then cancels with another $\sigma_j$, and the third $\sigma_j$ can be eliminated through destabilization, resulting in the closure of a strictly positive braid in $B_{n-1}[p_i^{\alpha}-1]$. So, $\ell_{i+1}^{\alpha,0} = \ell_i^{\alpha} - 3$ and $p_{i+1}^{\alpha,0} = p_i^{\alpha}-1$.
\item There are more than three $\sigma_j$'s remaining (or at least three if $j\neq p_i^{\alpha}-1$).\\
We change one $\sigma_j$ to $\sigma_j^{-1}$, which then cancels with another $\sigma_j$, but there are still at least two $\sigma_j$'s remaining. If $j\neq p_i^{\alpha}-1$ and there is only one $\sigma_j$ remaining, this still can not be eliminated through destabilization, so $\ell_{i+1}^{\alpha,0} = \ell_i^{\alpha} - 2$ and $p_{i+1}^{\alpha,0} = p_i^{\alpha}$.
\end{enumerate}
In the second and third cases, $\chi$ decreases by 2. In the first case, $\chi$ decreases by 1.\\
We have two cases for this braid closure and its right child:
\begin{enumerate}
\item There are exactly two $\sigma_j$'s remaining.\\
By assumption, $j= p_i^{\alpha}-1$. We remove one $\sigma_j$ and the other $\sigma_j$ can be eliminated through destabilization, so $\ell_{i+1}^{\alpha,1} = \ell_i^{\alpha} - 2$ and $p_{i+1}^{\alpha,1} = p_i^{\alpha}-1$.
\item There are more than two $\sigma_j$'s remaining.
We remove one $\sigma_j$ and there are still at least two $\sigma_j$'s remaining, so $\ell_{i+1}^{\alpha,1} = \ell_i^{\alpha} - 1$ and $p_{i+1}^{\alpha,1} = p_i^{\alpha}$.
\end{enumerate}
In both cases, $\chi$ decreases by 1.
\end{proof}

\begin{lemma}
Every link arising from the closure of a strictly positive braid word $\omega\in B_n[p]$ has a resolving tree of depth less than or equal to $\ell - p+1$, where $\ell$ is the length of $\omega$. Thus, the depth of the closure $\widehat{\omega}$ has an upper bound of $\ell - p+1$.
\label{upperbound}
\end{lemma}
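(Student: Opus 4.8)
The plan is to exhibit a single resolving tree for $\widehat{\omega}$ whose depth is at most $\ell - p + 1$; since the depth of a link is the minimum over all resolving trees, this immediately gives the stated upper bound. I would build the tree recursively from $\omega$ at the root and track the complexity $\chi = \ell - p + 1$ along every root-to-leaf path, arguing that $\chi$ strictly decreases at each step and cannot descend below a value that already forces an unlink. The two engines of the argument are Lemma \ref{doublesigma}, which re-expresses any non-unlink essentially strictly positive closure in a form containing a suitable $\sigma_j^2$, and Lemma \ref{length}, which converts such a $\sigma_j^2$ into a crossing whose two children each drop $\chi$ by at least one.

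The first step I would record is that the moves used to re-represent a braid word all preserve $\chi$. Commutation (Rule 3) and the braid relation (Rule 2) fix both $\ell$ and $p$, while (de-)stabilization (Rule 4) replaces $(\ell,p)$ by $(\ell\pm 1, p\pm 1)$, leaving $\chi = \ell - p + 1$ unchanged, as does the cyclic-permutation form of conjugation (Rule 5). Consequently, invoking Lemma \ref{doublesigma} at a node to produce a representative containing $\sigma_j^2$ (with $j = p-1$, or with more than two occurrences of $\sigma_j$) does not alter that node's complexity. The recursion then reads: at a node whose closure is an unlink, declare a leaf; otherwise re-represent via Lemma \ref{doublesigma} and apply Lemma \ref{length} to a crossing of the resulting $\sigma_j^2$, obtaining two children with $\max\{\chi_{i+1}^{\alpha,0}, \chi_{i+1}^{\alpha,1}\} \leq \chi_i^\alpha - 1$. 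Both children are again closures of essentially strictly positive braids, so the recursion is well defined.

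The heart of the argument is the bottom-out step. For any strictly positive word in $B_m[p]$ each of $\sigma_1,\dots,\sigma_{p-1}$ appears, so $\ell \geq p-1$ and $\chi \geq 0$, with $\chi = 0$ forcing every generator to occur exactly once. I would show such a closure is always an unlink: cyclically permute (Rule 5) to move the unique $\sigma_{p-1}$ to the end, destabilize (Rule 4), and induct down to $B_2[2]$, where $\sigma_1$ closes to the unknot, the strands $p+1,\dots,m$ contributing only split unknots. Thus any non-leaf node satisfies $\chi \geq 1$, so along every root-to-leaf path $\chi$ drops by at least one per step from $\chi_0 = \ell - p + 1$ and a leaf is reached within $\ell - p + 1$ steps; this caps the depth of the tree, and hence of $\widehat{\omega}$, at $\ell - p + 1$.

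The step I expect to be the main obstacle is precisely this bottom-out claim: verifying cleanly that a $\chi = 0$ word, in which every generator appears exactly once, always closes to an unlink, so that the strictly decreasing complexity genuinely guarantees termination at or before depth $\ell - p + 1$ rather than merely that $\chi$ remains nonnegative.
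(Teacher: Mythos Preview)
Your proposal is correct and follows essentially the same route as the paper: use Lemma~\ref{doublesigma} to locate a suitable $\sigma_j^2$, apply Lemma~\ref{length} so that $\chi$ drops by at least one at each branching, and observe that $\chi=0$ forces each generator to appear exactly once, so successive destabilizations reduce the closure to an unlink. Your write-up is simply more explicit than the paper's on two points the paper leaves implicit---that the re-representation moves (Rules~2--5) preserve $\chi$, and the cyclic-permute-then-destabilize mechanism for the $\chi=0$ case---but the underlying argument is the same.
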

\begin{proof}
After using the first claim of Lemma \ref{doublesigma} to find squares, Lemma \ref{length} allows us to construct a tree for $\widehat{\omega}$ of depth less than or equal to $\chi=\ell-p+1$, such that for each leaf we have $\chi_i^{\alpha}=0$. It remains to see that this is a complete resolving tree -- in other words, that these leaves are in fact all unlinks. But $\chi_i^{\alpha}=0$ implies that each letter $\sigma_1,\ldots,\sigma_{p_{i}^{\alpha}-1}$ is used exactly once, so successive destabilizations result in a braid closure with no crossings.

\end{proof}
\section{A Lower Bound on Depth}
\label{sec:lower}
The skein relation \eqref{skein} implies that if \(L'\) is a link which appears as a child of \(L\) in a resolving tree, then
\begin{equation}
Br(\Delta_L(t))\leq Br(\Delta_{L'}(t))+1,
\end{equation}
where \(Br\) is the breadth of a polynomial (the difference between the greatest and least exponents of its nonzero terms). Since an unlink has an Alexander polynomial of breadth zero, it follows that
\begin{equation}
Br(\Delta_L(t))\leq \text{depth}(L).
\label{breadthineq}
\end{equation}
Therefore, to prove our main result, it is only necessary to show that if $L$ is the closure of a strictly positive braid, represented by a braid word $\omega$ of length $\ell$ in $B_n$, then \[Br(\Delta_L(t))\geq \ell-n+1.\] This is a well-known fact, because closures of positive braids are fibered \cite{Stallings} , but we present a proof here for the sake of self-containment. The proof relies on Kauffman's state sum model for the Alexander polynomial \cite{Kauffman}.

We will prove this by showing that (after a choice of starred regions) there is a unique Kauffman state with weight \(\pm1\), and a unique Kauffman state with weight \(\pm t^{\ell-n+1}\). In order to do so, we introduce some notation, with Figure \ref{refbraid} as an illustration. Let $n_i$ be the number of occurences of $\sigma_i$ in $\omega$. Let \(c_i^j\) denote the crossing which corresponds to the \(j\)th occurence of $\sigma_i$. Let $R_i^j$ denote the region which has $c_i^j$ at the top and $c_i^{j-1}$ at the bottom, if $j>1$, and $R_i^1$ the region which connects $c_i^{n_i}$ to $c_i^1$. Finally, $R_0$ is the exterior region, and $R_n$ is the interior region. Star the adjacent regions $R_0$ and $R_1^1$.\\

\begin{figure}[h!]
\centering
\begin{tikzpicture}[scale = .9]
\braid[strands=4,braid start={(0,0)}]
{\sigma_2^{-1} \sigma_1^{-1} \sigma_3^{-1} \sigma_1^{-1} \sigma_3^{-1} \sigma_2^{-1} \sigma_1^{-1} \sigma_2^{-1}}
\draw[thick] (6,0)--(6,-8);
\draw[thick] (7,0)--(7,-8);
\draw[thick] (8,0)--(8,-8);
\draw[thick] (9,0)--(9,-8);
\draw[thick] (6,0) arc (0:180:1);
\draw[thick] (7,0) arc (0:180:2);
\draw[thick] (8,0) arc (0:180:3);
\draw[thick] (9,0) arc (0:180:4);
\draw[thick] (6,-8) arc (180:0:-1);
\draw[thick] (7,-8) arc (180:0:-2);
\draw[thick] (8,-8) arc (180:0:-3);
\draw[thick] (9,-8) arc (180:0:-4);
\node at (6,-4) {$\vee$};
\node at (7,-4) {$\vee$};
\node at (8,-4) {$\vee$};
\node at (9,-4) {$\vee$};
\node at (2,-6.5) {$c_1^1$};
\node at (2,-3.5) {$c_1^2$};
\node at (2,-1.5) {$c_1^3$};
\node at (3,-7.5) {$c_2^1$};
\node at (3,-5.5) {$c_2^2$};
\node at (3,-0.5) {$c_2^3$};
\node at (4,-4.5) {$c_3^1$};
\node at (4,-2.5) {$c_3^2$};
\node at (0.5,-4) {$\ast R_0$};
\node at (1.5,-8) {$\ast R_1^1$};
\node at (1.5,0) {$\ast R_1^1$};
\node at (1.5,-5) {$R_1^2$};
\node at (1.5,-2.5) {$R_1^3$};
\node at (2.5,-8) {$R_2^1$};
\node at (2.5,0) {$R_2^1$};
\node at (2.5,-6.5) {$R_2^2$};
\node at (2.5,-3) {$R_2^3$};
\node at (3.5,-8) {$R_3^1$};
\node at (3.5,-3.5) {$R_3^2$};
\node at (3.5,0) {$R_3^1$};
\node at (5,-4) {$R_4$};
\end{tikzpicture}
\caption{The closure of the braid $\sigma_2 \sigma_1 \sigma_2 \sigma_3 \sigma_1 \sigma_3 \sigma_1 \sigma_2$}
\label{refbraid}
\end{figure}
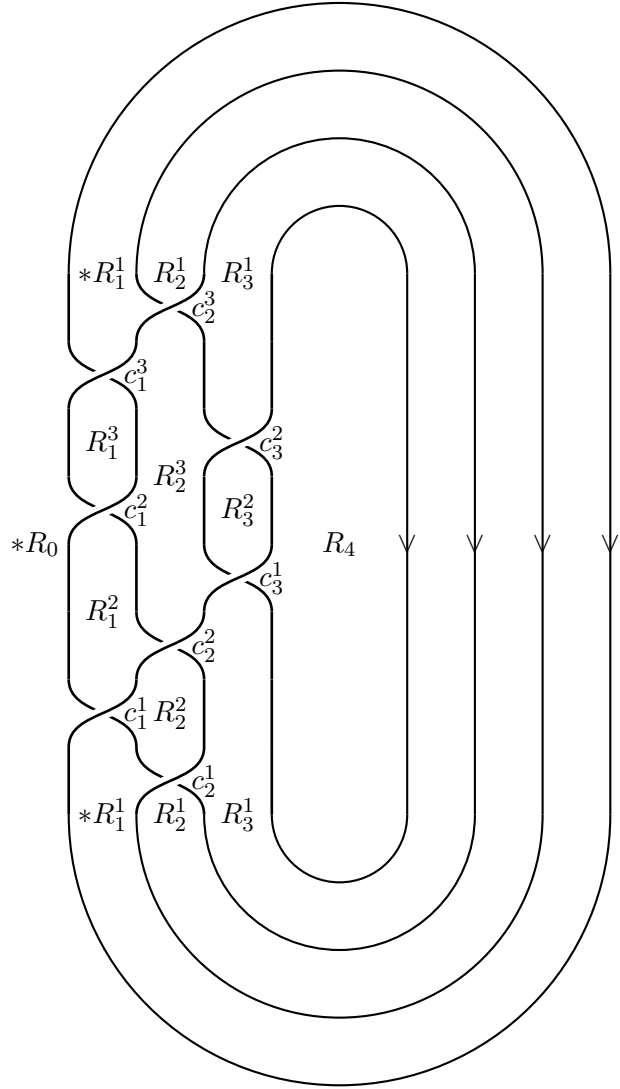

\begin{lemma}
There is a unique state with weight $\pm 1$.
\label{state1}
\end{lemma}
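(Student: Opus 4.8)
The plan is to compute directly with Kauffman's state sum relative to the starred regions $R_0$ and $R_1^1$ fixed above. Recall that a state assigns to each of the $\ell$ crossings a single marker occupying one of its four incident regions, so that every unstarred region carries exactly one marker and the two starred regions carry none; since starring $R_0$ and $R_1^1$ leaves exactly $\ell$ unstarred regions, a state is a perfect matching between the crossings and the unstarred regions along the incidences shown in Figure \ref{refbraid}. At a positive crossing $c_i^j$ the standard weights give its two side regions (the ones lying in the adjacent columns $i\pm1$) the units $\pm1$, while the region $R_i^{j+1}$ directly above receives a weight of $t$-degree $+\frac{1}{2}$ and the region $R_i^j$ directly below one of $t$-degree $-\frac{1}{2}$. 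Writing $A$, $B$, $S$ for the numbers of markers of a state placed in above-, below-, and side-regions, its weight is the monomial $\pm t^{(A-B)/2}$, and since $A+B+S=\ell$ the exponent equals $A+\frac{1}{2}S-\frac{\ell}{2}$. After the usual normalization of $\Delta_L$ by a unit $\pm t^{k/2}$ sending its least exponent to $0$, a state has weight $\pm1$ exactly when it minimizes $2A+S$, i.e.\ attains the minimal degree. Because each state contributes a single monomial, the coefficient of $t^0$ is $\pm1\neq0$ as soon as this minimum is attained by a \emph{unique} state, so it suffices to identify that state and prove its uniqueness.

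To produce it, note that $c_1^1$ borders \emph{both} starred regions (its below region $R_1^1$ and its west side region $R_0$), so once above-markings are excluded it is forced to mark its east side region, which lies in an adjacent column. That region is thereby filled, leaving the next crossing to its east no option but to pass the marking further east in turn; strict positivity (every $\sigma_i$ occurs, so no column is empty) keeps this propagating as a forced staircase running east, one side-marking per column, terminating when it reaches the interior region $R_n$ as the east side region of a column-$(n-1)$ crossing. Every other crossing marks the below region beneath it. This state has $A=0$ and $S=n-1$, so its exponent is $\frac{1}{2}(n-1)-\frac{\ell}{2}=-\frac{1}{2}(\ell-n+1)$, consistent with the target breadth $\ell-n+1$.

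The crux is that this is the \emph{only} minimizer of $2A+S$, and it is here that I expect the main obstacle. Uniqueness splits into two points. First, once it is known that a minimal state has $A=0$, the staircase is forced: each occupied region leaves its eastward crossing a single admissible marker, so the matching is determined. The harder point is that a minimal state must indeed have $A=0$ — one must rule out states trading an above-marking (cost $+1$ in $2A+S$) for a compensating drop in side-markings. I would handle this by a local exchange argument in the spirit of Kauffman's Clock Theorem: any above-marking can be slid so as to strictly decrease the exponent, so no state carrying one is minimal, whence $A=0$ at the minimum and $S=n-1$ follows as above. A direct inspection of the resulting staircase then confirms that the surviving monomial is exactly $\pm1$. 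The companion Lemma \ref{statet} is proved by the mirror argument, maximizing rather than minimizing and interchanging the roles of above- and below-markings throughout.
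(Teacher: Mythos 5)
Your proposal follows the same broad strategy as the paper (identify the extremal Kauffman state as a forced ``staircase''), but you adopt a different labeling convention, and that choice is exactly where your argument develops gaps. The paper's labels (Figure \ref{genrij}) are \emph{not} symmetric in the four quadrants: the unit labels sit at the region above a crossing ($-1$) and the region to its right ($1$), while the region below ($-t$) and the region to the left ($t$) both carry a positive power of $t$. Consequently every state's weight is $\pm t^k$ with $k\geq 0$, and ``weight $\pm1$'' literally means that \emph{every} crossing marks its above or its right region; the whole proof is then local forcing ($c_1^{n_1}$ has both of its unit-labelled regions starred, so it must mark right; the bottom crossing of that region must then mark right; and after the staircase each remaining $R_i^j$ has exactly one unit label left, at its bottom crossing). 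Your convention instead puts units on \emph{both} side regions and $t^{\pm 1/2}$ above and below, so ``weight $\pm1$'' becomes a global statement about minimizing $2A+S$, and you are left having to prove two things you do not prove: that a minimal state has $A=0$, and that minimality excludes west-markings. The first you explicitly defer to an unproven ``local exchange argument in the spirit of the Clock Theorem''; that deferred step is the crux of the lemma, not a technicality, so as written this is a genuine gap.

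The second gap is unacknowledged, and your claim is in fact false as stated: $A=0$ alone does not force the staircase, because a crossing whose above-region is excluded and whose below-region is occupied may still mark its \emph{west} side region, which costs exactly the same in $2A+S$ as marking east, so nothing local rules it out. Concretely, for the closure of $\sigma_1\sigma_2\sigma_1\sigma_2$ in $B_3$, the state in which the upper $\sigma_2$-crossing marks west (into the region between the two $\sigma_1$-crossings) and the other three crossings all mark east has $A=0$ and covers every unstarred region exactly once, yet is not your staircase; note it even has literal weight $\pm t^0$ in your convention, so the normalization step you invoke is doing real work. Both gaps can be closed at once by a counting argument: for any state and any $i$, comparing the crossings in columns $j>i$ with the unstarred regions lying east of them (including $R_n$) gives $r_i = 1 + l_{i+1}$, where $r_i$, $l_i$ count east- and west-markings made by column-$i$ crossings; summing gives $S = (n-1) + 2\sum_i l_i \geq n-1$ for \emph{every} state, hence $2A+S \geq n-1$ with equality if and only if $A=0$ and all $l_i=0$, after which your forcing argument is correct and yields uniqueness. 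Either add this count, or switch to the paper's asymmetric labels, under which the issue never arises; the same repair is needed for your mirror version of Lemma \ref{statet}.
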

\begin{proof}
Figure \ref{genrij} shows a generic region $R_i^j$. Note that it has a single $-1$ at its bottom crossing, a single $-t$ at its top crossing, $1$ at each of its left crossings, and $t$ at each of its right crossings. Start with the crossing $c_1^{n_1}$. Since the two starred regions are adjacent to this crossing, its only remaining labels are $1$, in region $R_2^j$ for some $j$, and $-t$. Therefore, to get a weight of $\pm 1$, we must assign the region $R_2^j$ to this crossing. Next, consider the crossing $c_2^{j-1}$. It has a label of $-1$ in region $R_2^j$, but we have already used this region; therefore we are forced to use the label $1$ in region $R_3^k$, for some $k$. This process repeats, forcing us to use a label 1 in region $R_n$ at some crossing $c_{n-1}^l$. To this point, we have positioned the black circles in Figure \ref{kauffmanpm1}. \\

\begin{figure}[h!]
\centering
\parbox{5.5cm}{
\centering
\begin{tikzpicture}
\braid[strands=1,braid start={(1,0)}]
{\sigma_1^{-1}}
\braid[strands=1,braid start={(0,-1)}]
{\sigma_1^{-1}}
\braid[strands=1,braid start={(2,-2)}]
{\sigma_1^{-1}}
\braid[strands=1,braid start={(0,-3)}]
{\sigma_1^{-1}}
\braid[strands=1,braid start={(2,-4)}]
{\sigma_1^{-1}}
\braid[strands=1,braid start={(1,-5)}]
{\sigma_1^{-1}}
\draw[thick] (2,-2)--(2,-3);
\draw[thick] (2,-4)--(2,-5);
\draw[thick] (3,-1)--(3,-2);
\draw[thick] (3,-3)--(3,-4);
\node at (2,-2.5) {$\wedge$};
\node at (3,-3.5) {$\wedge$};
\node at (2.5,-6) {$c_i^{j-1}$};
\node at (2.5,-3) {$R_i^j$};
\node at (2.5,0) {$c_i^j$};
\node at (2.5,-5) {$-1$};
\node at (2.5,-1) {$-t$};
\node at (2,-3.5) {$1$};
\node at (2,-1.5) {$1$};
\node at (3,-4.5) {$t$};
\node at (3,-2.5) {$t$};
\end{tikzpicture}
\caption{A generic region $R_i^j$}
\label{genrij}
}
\qquad \qquad \qquad
\begin{minipage}{5.5cm}
\centering
\begin{tikzpicture}[scale = .5]
\braid[strands=4,braid start={(0,0)}]
{\sigma_2^{-1} \sigma_1^{-1} \sigma_3^{-1} \sigma_1^{-1} \sigma_3^{-1} \sigma_2^{-1} \sigma_1^{-1} \sigma_2^{-1}}
\draw[thick] (6,0)--(6,-8);
\draw[thick] (7,0)--(7,-8);
\draw[thick] (8,0)--(8,-8);
\draw[thick] (9,0)--(9,-8);
\draw[thick] (6,0) arc (0:180:1);
\draw[thick] (7,0) arc (0:180:2);
\draw[thick] (8,0) arc (0:180:3);
\draw[thick] (9,0) arc (0:180:4);
\draw[thick] (6,-8) arc (180:0:-1);
\draw[thick] (7,-8) arc (180:0:-2);
\draw[thick] (8,-8) arc (180:0:-3);
\draw[thick] (9,-8) arc (180:0:-4);
\node at (6,-4) {$\vee$};
\node at (7,-4) {$\vee$};
\node at (8,-4) {$\vee$};
\node at (9,-4) {$\vee$};
\node at (4,-2.5) {$\bullet$};
\node at (3,-5.5) {$\bullet$};
\node at (2,-1.5) {$\bullet$};
\node at (3.5,-4) {$\circ$};
\node at (2.5,0) {$\circ$};
\node at (2.5,-7) {$\circ$};
\node at (1.5,-3) {$\circ$};
\node at (1.5,-6) {$\circ$};
\node at (0.5,-4) {$\ast$};
\node at (1.5,-8) {$\ast$};
\node at (1.5,0) {$\ast$};
\end{tikzpicture}
\caption{The Kauffman state with weight $\pm 1$}
\label{kauffmanpm1}
\end{minipage}
\end{figure}

Next we position the white circles in Figure \ref{kauffmanpm1}. Observe that the regions $R_1^j$ have no ``left" crossings, so they have no 1 labels. It follows that we must assign $R_1^j$ to crossing $c_1^{j-1}$, at which it has its unique -1 label. Now move to the regions $R_2^j$. We have used all of the $c_1^j$ crossings, so there are no more ``left" crossings available. So again, assign $R_2^j$ to $c_2^{j-1}$, at which it has its unique -1 label. Iterating this procedure yields a state with $(n-1)$ `1' labels, and $(\ell-n+1)$ `-1' labels. At each step, the choice is forced, so this is a unique state.
\end{proof}
\begin{lemma}
There is  a unique state with weight $\pm t^{\ell-n+1}$.
\label{statet}
\end{lemma}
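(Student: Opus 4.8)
The plan is to mirror the argument of Lemma \ref{state1}, exploiting the symmetry of the braid closure under a top-bottom flip. Recall from Figure \ref{genrij} that a generic region $R_i^j$ carries the label $-1$ at its bottom crossing $c_i^{j-1}$, the label $-t$ at its top crossing $c_i^j$, the label $1$ at each of its left crossings, and the label $t$ at each of its right crossings. In Lemma \ref{state1} we built the weight-$\pm1$ state by forcing each region to be assigned to a crossing where it contributes $\pm 1$; here we instead want to force, as far as possible, the higher-power labels $-t$ (top) and $t$ (right), so that the total weight is a maximal power of $t$. Since a Kauffman state assigns to each crossing exactly one of its four adjacent regions and uses each non-starred region exactly once, and since there are $\ell$ crossings and $\ell+1$ regions of which two are starred, the weight is determined by which region sits at which crossing.

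First I would count the maximal possible power of $t$. In Lemma \ref{state1} the unique weight-$\pm 1$ state used the $n-1$ ``left'' labels ($1$) and the $\ell - n + 1$ ``bottom'' labels ($-1$); by the flip symmetry of the picture, the complementary extreme should use $n-1$ labels of value $t$ (the ``right'' labels) and $\ell - n + 1$ labels of value $-t$ (the ``top'' labels), giving total weight $\pm t^{\ell - n + 1}$. So the target is the state that is, in a precise sense, dual to the one in Lemma \ref{state1}. Concretely, I would construct it by the same forcing procedure run from the opposite end: start at the crossing $c_1^1$, observe that with $R_0$ and $R_1^1$ starred the only high-power label available forces a specific region assignment contributing $t$, then propagate the forcing through the $c_2, \dots, c_{n-1}$ crossings to place the $n-1$ factors of $t$, and finally use the top labels $-t$ on the remaining crossings, assigning each region $R_i^j$ to its top crossing $c_i^j$. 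At each stage I would argue, exactly as in Lemma \ref{state1}, that no other region is available, so the choice is forced and the state is unique.

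The cleanest way to organize this is to make the duality explicit: the map $t \mapsto 1/t$ together with the top-bottom flip of the diagram (which interchanges top and bottom crossings of each region, hence swaps the roles of the $-1$ and $-t$ labels, and reverses the sense of ``left'' and ``right,'' swapping $1$ and $t$) sends the Kauffman state sum to itself up to an overall normalization. Under this symmetry the weight-$\pm 1$ state of Lemma \ref{state1} is carried to a state whose weight is the top power of $t$, and uniqueness of the former yields uniqueness of the latter. If I take this route I would need to check that the starred pair $\{R_0, R_1^1\}$ is preserved (or maps to an equivalent admissible choice) under the flip, so that the two state sums are genuinely comparable; this bookkeeping about the starred regions is where I would be most careful.

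The main obstacle is the same one that makes Lemma \ref{state1} nontrivial: verifying that the forcing really is forced at every crossing, i.e.\ that at each step the region we are compelled to use is not already occupied and no alternative high-$t$ label exists. In particular I must confirm that the ``right'' labels propagate through a single chain $c_1^1, c_2^{k}, \dots, c_{n-1}^{l}$ terminating at the interior region $R_n$ (the analogue of the black-circle chain in Figure \ref{kauffmanpm1}), and then that the remaining regions $R_i^j$ each have their top crossing $c_i^j$ as the only still-available slot. Rather than re-running the entire combinatorial argument, I expect the shortest honest proof is the symmetry reduction to Lemma \ref{state1}; the only real work is setting up the flip precisely and tracking the starred regions, after which uniqueness and the weight $\pm t^{\ell - n + 1}$ follow immediately.
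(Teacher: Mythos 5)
Your symmetry reduction does not work as stated, for two independent reasons. First, the ``top-bottom flip'' is not a symmetry of the relevant combinatorial structure. Reflecting the diagram across a horizontal axis (keeping over/under information) turns every positive crossing into a negative one---it produces a diagram of the \emph{mirror} link, the closure of a negative braid word---so Figure \ref{genrij} and Lemma \ref{state1} do not apply to the flipped diagram at all. Moreover, a top-bottom flip exchanges each region's top and bottom crossings but does \emph{not} reverse left and right, so your claimed label duality ($1 \leftrightarrow t$ together with $-1 \leftrightarrow -t$) is false for a flip. The operation that swaps both pairs is the $180^\circ$ rotation, which does preserve positivity, but it sends $\widehat{\omega}$ to the closure of a different positive word (the reverse of $\omega$ with $\sigma_i \mapsto \sigma_{n-i}$) and---fatally---carries the starred pair $\{R_0, R_1^1\}$ to the exterior region together with a region in column $n-1$, not to the standard starred pair. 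Since Kauffman states are bijections between crossings and \emph{non-starred} regions, the rotated states are not states for the star choice to which Lemma \ref{state1} applies, and Kauffman's star-independence only says the \emph{total} sums agree up to $\pm t^k$; it does not transfer a statement of the form ``there is a unique extremal state'' across a change of stars. This is exactly the bookkeeping you flagged as a concern, and it is fatal rather than cosmetic: the shift $k$ here is in fact $n-1$, which is why the weights refuse to match up.

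That mismatch already shows in your arithmetic: a state with $n-1$ labels of $t$ and $\ell-n+1$ labels of $-t$ has weight $\pm t^{(n-1)+(\ell-n+1)} = \pm t^{\ell}$, not $\pm t^{\ell-n+1}$, and in fact no state of weight $\pm t^{\ell}$ exists for the given stars. The actual extremal state (Figure \ref{kauffmanlw1}) uses $n-1$ labels of $+1$ and $\ell-n+1$ labels of $-t$: the ``right'' $t$-labels are never used, because at the crossings $c_1^j$ every $t$-label lies in the starred region $R_0$, and inductively the $t$-labels at $c_i^j$ lie in the already-used regions $R_{i-1}^k$. For the same reason your fallback forcing cannot even start: at $c_1^1$ the only available non-starred labels are $\pm 1$ (the $-1$ from $R_1^2$ and the $+1$ from a region $R_2^k$), so there is no assignment there ``contributing $t$.'' Finally, both of your routes omit the one genuinely substantive step of the proof: the counting argument showing that for each $i$ at least one crossing $c_i^j$ must carry a $\pm 1$ label---otherwise the $s_i+1$ regions lying to the right of column $i$ would have to be matched injectively to the $s_i$ crossings to the right of column $i$. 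That pigeonhole bound is what caps the top power at $\ell-n+1$ and forces ``exactly one $\pm 1$ label per column,'' after which the explicit column-by-column forcing (assign $c_i^j$ to $R_i^j$ except at one crossing $c_i^{k_i}$, which must take the $+1$ from $R_{i+1}^{k_{i+1}}$) yields both existence and uniqueness. Without it, neither the value $\ell-n+1$ nor the uniqueness claim can be established.
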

\begin{proof}
We begin by noting that, for fixed $i$, we cannot consistently choose $\pm t$ at crossing $c_i^j$ for all $j$. Such a choice would use $R_i^k$ or $R_{i-1}^k$ regions. Then, to the right of these crossings, there would be only \[s_i:=n_{i+1} + n_{i+2}=\cdots n_{n-1}\] crossings, but $s_{i}+1$ adjacent regions to be matched. Since this cannot happen, it follows that for each $i$, at least one of the $c_i^j$ must have a $\pm 1$ label \footnote{Actually, a similar argument shows that at least one must have a $+1$ label, but we do not need this distinction.}. Therefore, the highest power of $t$ which can be assigned to a state is $\ell-n+1$. In order to achieve this, it is necessary that exactly one $c_i^j$ has a $\pm 1$ label for each $i$.\\

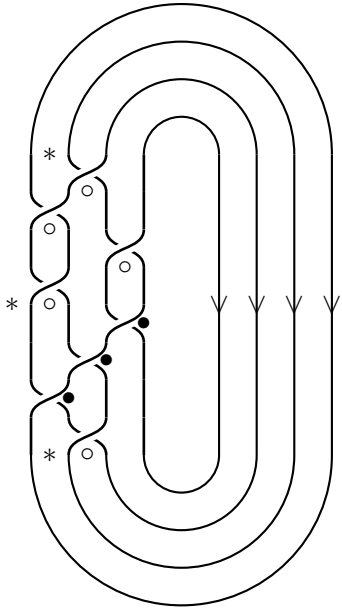
\begin{figure}[h!]
\centering
\begin{tikzpicture}[scale = .5]
\braid[strands=4,braid start={(0,0)}]
{\sigma_2^{-1} \sigma_1^{-1} \sigma_3^{-1} \sigma_1^{-1} \sigma_3^{-1} \sigma_2^{-1} \sigma_1^{-1} \sigma_2^{-1}}
\draw[thick] (6,0)--(6,-8);
\draw[thick] (7,0)--(7,-8);
\draw[thick] (8,0)--(8,-8);
\draw[thick] (9,0)--(9,-8);
\draw[thick] (6,0) arc (0:180:1);
\draw[thick] (7,0) arc (0:180:2);
\draw[thick] (8,0) arc (0:180:3);
\draw[thick] (9,0) arc (0:180:4);
\draw[thick] (6,-8) arc (180:0:-1);
\draw[thick] (7,-8) arc (180:0:-2);
\draw[thick] (8,-8) arc (180:0:-3);
\draw[thick] (9,-8) arc (180:0:-4);
\node at (6,-4) {$\vee$};
\node at (7,-4) {$\vee$};
\node at (8,-4) {$\vee$};
\node at (9,-4) {$\vee$};
\node at (4,-4.5) {$\bullet$};
\node at (3,-5.5) {$\bullet$};
\node at (2,-6.5) {$\bullet$};
\node at (3.5,-3) {$\circ$};
\node at (2.5,-1) {$\circ$};
\node at (2.5,-8) {$\circ$};
\node at (1.5,-2) {$\circ$};
\node at (1.5,-4) {$\circ$};
\node at (0.5,-4) {$\ast$};
\node at (1.5,-8) {$\ast$};
\node at (1.5,0) {$\ast$};
\end{tikzpicture}
\caption{The Kauffman state with weight $\pm t^{\ell-w+1}$}
\label{kauffmanlw1}
\end{figure}

Consider first the crossings $c_1^j$. All of the $t$ labels at these crossings are in the starred region $R_0$. The only way to get the maximum $n_1-1$ labels of $\pm t$ is to assign $c_1^j$ to $R_1^j$ for all $j>1$. In this case, the only remaining choice at the crossing $c_1^1$, which we will denote by $c_1^{k_1}$, is the label $+1$, coming from some region, which we will call $R_2^{k_2}$. \\
Then consider the crossings $c_2^j$. Again, since all of the $R_1^j$ regions have already been assigned to crossings, there are no $t$ labels available. The best we can do is get $n_2-1$ labels of $-t$, by assigning crossing $c_2^j$ to region $R_2^j$. This can be done for all but one $j$, because one of the regions, $R_2^{k_2}$, was already assigned to crossing $c_1^{k_1}$. This leaves the crossing $c_2^{k_2}$ without a label, and the only remaining choice is the $+1$ label coming from a region which we will call $R_3^{k_3}$. \\
The same process uniquely determines the assignments for all other crossings. Eventually we get a crossing $c_{n-1}^{k_{n-1}}$, which can only be assigned to the region $R_n$. This state has $(n-1)$ labels of $+1$, where the crossing $c_i^{k_i}$ is assigned to region $R_{i+1}^{k_{i+1}}$, and $(\ell-n+1)$ labels of $-t$, where the remaining crossings $c_i^j$ are assigned to regions $R_i^j$. This is the unique state with weight $\pm t^{\ell-n+1}$, shown in Figure \ref{kauffmanlw1}.
\end{proof}
\begin{proof}[\sc Proof of Theorem \ref{main}.] Lemma \ref{upperbound} shows that $\ell-n+1$ is an upper bound for the depth. Lemmas \ref{state1} and \ref{statet} together show that \[Br(\Delta_L(t))=\ell-n+1,\] and so by inequality \eqref{breadthineq}, this quantity is also a lower bound for the depth.
\end{proof}
More generally, the closure of a positive -- but not strictly positive -- braid is a split link, splitting into closures of strictly positive braids, whose depths are determined by Theorem \ref{main}. Note also that the same argument applies to closures of negative braids.

\bibliography{braid}{}
\bibliographystyle{amsplain} 




\end{document}